\tikzstyle{point}=[circle, inner sep = 2pt, outer sep = 1pt, minimum size = 5pt, fill=black, draw=black]
\newtheorem{theo}{Theorem}
\newtheorem{lemma}{Lemma}
\newtheorem{rema}{Remark}
\begin{document}

\title{Realising countable groups as automorphisms of origamis on the Loch Ness monster}
\author[R.A. Hidalgo]{Rub\'en A. Hidalgo}
\author[I. Morales]{Israel Morales}

\keywords{Automorphisms, Origamis, Loch Ness monster} 
\subjclass[2010]{}

\address{Departamento de Matem\'atica y Estad\'{\i}stica, Universidad de La Frontera, Temuco, Chile}
\email{ruben.hidalgo@ufrontera.cl}

\address{Instituto de Matem\'aticas UNAM Unidad Oaxaca, Oaxaca, M\'exico}
\email{imorales@im.unam.mx}

\thanks{The first author is partially supported by Projects Fondecyt 1190001 and 1220261. The second author was partially supported by Proyecto CONACyT Ciencia de Frontera 217392 cerrando brechas y extendiendo puentes en Geometr\'ia y Topolog\'ia.}

\begin{abstract}
It is known that every finite group can be represented as the full group of automorphisms of a suitable compact origami. In this paper, we provide a short argument to note  that the same holds for any countable group by considering origamis on the Loch Ness monster. 
\end{abstract}

\maketitle

\section{Introduction}
In this paper, all surfaces are assumed to be orientable, connected, Hausdorff, second countable and without boundary. If $X$ is a surface, then we denote by ${\rm Hom}^{+}(X)$ the group of its orientation-preserving self-homeomorphisms. The Loch Ness monster is the unique, up to homeomorphisms, infinite genus surface with exactly one end.

Following \cite{Lochak}, an origami ${\mathcal O}$ on a surface $X$, consists of a countable collection of copies of the Euclidean unit square which are glued by the following rules:
\begin{enumerate}
\item each left edge of a square is identified by a translation with a right one;
\item each lower edge of a square is identified by a translation with an upper one;
\item after gluing all the unit squares, one obtains, up to homeomorphisms, $X$.
\end{enumerate}

For details on compact origamis, the reader may consult \cite{HS,S}. Some works in the setting of non-compact origamis are \cite{Chamanara04,BV13,HHW13,Hooper15}. 

\begin{rema}
Note that, in our definition above, the orbit of every vertex, under such identifications, is necessarily finite. If the orbit of a vertice were infinite, then we should have deleted such an orbit and it will provide a point in one of the ends of the surface. 
\end{rema}

The origami ${\mathcal O}$ induces an embedded connected graph ${\mathcal G}$ on $X$, whose edges are the sides of the squares, the vertices are the vertices of the squares and the faces (i.e., the complement components) are squares (with certain extra properties, such as that about each vertex one sees a multiple of four squares). We say that $\phi \in {\rm Hom}^{+}(X)$ induces an automorphism of the origami ${\mathcal O}$, if it preserves the corresponding square tessellation. In this case, $\phi$ induces an automorphism $\rho$ of the graph ${\mathcal G}$; such $\rho$ is called an automorphism of ${\mathcal O}$. We denote by ${\rm Aut}({\mathcal O})$ the group of these automorphisms. If $X$ is compact, then ${\rm Aut}({\mathcal O})$ is necessarily finite. If $X$ is non-compact, then ${\rm Aut}({\mathcal O})$ might be infinite (but countable).

In \cite{H} it was observed that  every finite group is isomorphic to the group of automorphisms of some compact origami. In the following, we 
extends the above for any countable group.

\begin{theo}\label{main}
Let $G$ be a countable (finite or infinite) group. Then, there is an origami ${\mathcal O}$ such that
$Aut(\mathcal{O}) \cong G$. Let $X$ be the underlying surface of the origami. If $G$ is finite, then $X$ can be assumed to be compact and, if $G$ is infinite, then $X$ can be assumed to be homeomorphic to a Loch Ness monster.
\end{theo}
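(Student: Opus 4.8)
The plan is to separate the two cases and to reduce the infinite case to a Frucht-type rigidification carried out on a scaffold chosen to have a single end. For $G$ finite there is nothing to prove beyond what is cited: \cite{H} already produces a compact origami $\mathcal O$ with $\mathrm{Aut}(\mathcal O)\cong G$. So assume $G$ is countably infinite. The real content is to realize $G$ \emph{exactly}, with no residual symmetry, on a surface that is at once of infinite genus and has \emph{exactly one} end. I would build $\mathcal O$ as a square-tiled surface modeled on a decorated graph carrying a $G$-action, combining three independent devices: (a) rigid gadgets to force the automorphisms to be precisely the ones coming from $G$; (b) an infinite supply of handles to force infinite genus; and (c) an auxiliary $\mathbb Z$-direction to collapse all ends to one.

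First I would fix a (countable) generating set $S$ of $G$ and form the directed, $S$-colored Cayley graph $\Gamma=\mathrm{Cay}(G,S)$, on which $G$ acts freely by left translation and whose color- and direction-preserving automorphisms are exactly the left translations. Rather than thicken $\Gamma$, I place one \emph{vertex block} at each point of the grid $G\times\mathbb Z$, join two blocks by a \emph{horizontal edge-piece} whenever the corresponding $G$-coordinates differ by a generator (at the same height), and by a \emph{vertical edge-piece} whenever the $\mathbb Z$-coordinates are consecutive (in the same column). Each vertex block and each edge-piece is then realized by a fixed finite configuration of unit squares, chosen asymmetric and pairwise non-isometric across the different roles, colors and directions. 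This is the origami analogue of Frucht's construction: any tessellation-preserving $\phi\in\mathrm{Hom}^{+}(X)$ must permute blocks and edge-pieces while preserving their type, the edge-colors, the edge-directions, and the split into horizontal and vertical pieces.

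Next I eliminate the remaining symmetries and create genus simultaneously. Fix an injection $\lambda\colon\mathbb Z\to\mathbb N$ and attach exactly $\lambda(n)$ handles to every vertical edge-piece at height $n$, the decoration depending only on the height $n$ and not on the column $g\in G$. An orientation-preserving automorphism carries a vertical piece to an isometric vertical piece and so preserves its handle count; since $\lambda$ is injective, it must fix every height $n$, hence preserve each slice $G\times\{n\}$ setwise. On a fixed slice it then acts as a color- and direction-preserving automorphism of $\Gamma$, i.e.\ as left translation by some $t_n\in G$, and the vertical edges joining $(g,n)$ to $(g,n+1)$ force $t_n=t_{n+1}$ for all $n$. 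Thus every automorphism is one global left translation, giving $\mathrm{Aut}(\mathcal O)\cong G$, while the injectivity of $\lambda$ already produces infinitely many handles, so the genus is infinite. Because every vertex meets only finitely many squares (each $\lambda(n)$ is finite), the square complex is locally finite.

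Finally I identify the underlying surface $X$. The ends of $X$ agree with the ends of the graph $\mathcal G$, and the point of the extra $\mathbb Z$-factor is that this graph has a single end regardless of the end structure of $G$: after deleting any finite subcomplex, every slice of sufficiently large height avoids it and is connected (a copy of $\Gamma$ decorated uniformly in $g$), and every remaining block reaches such a high slice along its vertical column, so there is exactly one unbounded component. Being of infinite genus with one end, $X$ is the Loch Ness monster by the characterization recalled in the introduction. The step I expect to be the main obstacle is exactly this control of the ends: one must verify that forcing a single end (rather than inheriting the possibly many ends of $G$) leaves no new automorphisms, and that when $G$ is not finitely generated---so that $S$, and hence the naive vertex degree, is infinite---the incident colored edges can be \emph{distributed} by subdividing each vertex block into a locally finite tree of rigid sub-blocks carrying one edge apiece, keeping the complex locally finite without creating symmetries. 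Granting these bookkeeping points, the construction yields the desired origami.
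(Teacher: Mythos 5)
Your construction is a genuinely different route from the paper's. The paper does not build the origami by hand: it invokes \cite[Theorem 7.2]{APV} to obtain a free, tame action of $G$ on the Loch Ness monster $X$ whose quotient is again the Loch Ness monster, i.e.\ an unbranched Galois covering $\pi:X\to X$ with $\mathrm{deck}(\pi)=\theta(G)\cong G$. It then composes $\pi$ with one fixed auxiliary origami pair $(X,Q_{0})$ (Lemma \ref{lema1}) whose only special feature is that the fibre $Q_{0}^{-1}(p_{0})$ contains exactly one point of local degree one; a short branched-covering argument (factoring $\pi_{A}=P\circ\pi$ and $Q_{0}=L\circ P$ and comparing local degrees of $Q_{0}$ over the point $P(x_{0})$) shows the deck group of $Q_{0}\circ\pi$ cannot be strictly larger than $\theta(G)$. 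So all the rigidification is concentrated in a single asymmetric cone point downstairs, and all the group theory is outsourced to \cite{APV}. Your Frucht-style construction on $\mathrm{Cay}(G,S)\times\mathbb{Z}$ trades that citation for an explicit combinatorial build; it is more self-contained and gives more direct control over the end structure and the genus, but it places the entire burden of the theorem on the assertion that the gadgets are rigid and pairwise non-isometric.

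That assertion is the one place I would not yet sign off on. An automorphism of an origami sees only the pair of permutations $(\sigma,\tau)$---equivalently, the pattern of singular vertices---so ``asymmetric as a configuration of squares'' is not by itself meaningful: you must design the blocks and edge-pieces so that they are intrinsically recognizable (say, by unique local patterns of cone angles) and so that no tessellation-preserving homeomorphism can match part of a block with part of an edge-piece. When $G$ is not finitely generated you additionally need infinitely many pairwise distinguishable gadget types together with the locally finite tree-of-sub-blocks refinement, and you must re-check that the subdivision itself creates no new symmetries. These are exactly the points you flag, but as written they \emph{are} the proof rather than bookkeeping. Two smaller corrections: a block lying below a deleted finite subcomplex need not reach a high slice ``along its vertical column'' if that column is obstructed---it should first move horizontally within its slice to one of the cofinitely many unobstructed columns before going up; and you should verify that every vertex of the final square complex has finite link (a multiple of four squares around it), since that finiteness is part of the paper's definition of an origami.
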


\begin{rema}
In \cite{H} it was proved that every topological action of a finite group, with positive genus quotient, can be realized by a group of automorphisms of a compact origami. We wonder if this more general fact is still valid at the level of any countable group action of positive genus quotient.
\end{rema}

\section{Preliminaries}\label{Sec:prelim}

\subsection{Locally finite branched coverings}
A surjective continuous map $Q:X \to Y$, where $X$ and $Y$ are surfaces, is called a locally finite branched cover if there exists a discrete subset $B_{Q} \subset Y$ (which it might be empty), called the locus of branched values of $Q$, such that 
\begin{enumerate}
\item $Q:X \setminus Q^{-1}(B_{Q}) \to Y \setminus B_{Q}$ is a covering map (orientation-preserving); and
\item each point $q \in B_{Q}$ has an open connected neighbourhood $U$ such that $Q^{-1}(U)$ consists of a collection $\{V_{j}\}_{j\in I}$ of pairwise disjoint connected open sets such that each of the restrictions $Q|_{V_j}:V_{j} \to U$  is a finite degree branched cover (i.e., is topologically equivalent to a branched cover of the form $z \in {\mathbb D} \mapsto z^{d_{j}} \in {\mathbb D}$, where ${\mathbb D}$ denotes the unit disc).
\end{enumerate}

If $X$ is a compact surface, then a branched covering (i.e., just satisfying (1) and (2)) is automatically locally finite. But, in the case that $X$ is non-compact, a branched covering might not be locally finite.

As usual, the deck group of a locally finite branched cover $Q:X \to Y$ is given by ${\rm deck}(Q):=\{ \phi \in {\rm Hom}^{+}(X): Q \circ \phi=Q\}$.

Let $G$ be a group, $X$ be a surface and $\theta:G \to {\rm Hom}^{+}(X)$ be an injective homomorphism. We say that the action of $\theta(G)$ is tame on $X$ if there is a Galois locally finite branched cover $Q:X \to Y$ with ${\rm deck}(Q)= \theta(G)$ (in particular, $G$ is countable).

If $Q:X \to Y$ is a locally finite branched cover, with $A={\rm deck}(Q)$, then there are a Galois locally finite branched cover $\pi_{A}:X \to R$, with ${\rm deck}(\pi_{A})=A$, and a locally finite branched cover $L:R \to Y$ such that $Q=L \circ \pi_{A}$. Moreover, if $H<A$, then there are a Galois locally finite branched cover $\pi_{H}:X \to Z$, with ${\rm deck}(\pi_{H})=H$, and a a locally finite branched cover $P:Z \to R$ such that $\pi_{A}=P \circ \pi_{H}$.

\subsection{Origami pairs}
Origamis can be equivalently be defined as certain types of locally finite branched covers of a two-dimensional torus, with at most one branch value.

Let $T \cong S^{1} \times S^{1}$ be the $2$-dimensional torus. Chose a point $p_{0} \in T$ and two simple loops $\alpha, \beta \subset T$  based at $p_{0}$ and inducing a canonical set of generators of $\pi_{1}(T,p_{0})$ (geometrically, by cutting $T$ along these two loops, we obtain an square where its four vertices correspond to $p_{0}$).

An origami pair is a pair $(X,Q)$, where $X$ is a surface and $Q:X \to T$ is a locally finite branched covering whose branched value set is contained inside $\{p_{0}\}$. If $X$ has negative Euler characteristic, then $Q$ must have a branch value.

If $(X,Q)$ is an origami pair, then the graph $Q^{-1}(\{\alpha, \beta\})={\mathcal G}$ defines an origami ${\mathcal O}$ on $X$ and, conversely, every origami can be so constructed. 
Moreover, 
${\rm Aut}({\mathcal O})={\rm deck}(Q)$.

\subsection{An auxiliary origami on the Loch Ness monster}
Each origami can be described by two permutations $\sigma, \tau \in {\rm Sym}(\Omega)$, where $\Omega$ denotes the collection of squares used to construct the given origami. The permutation $\sigma$ (respectively, $\tau$) indicates the horizontal (respectively, vertical) gluing. The connectivity property is equivalent for the group generated by $\sigma$ and $\tau$ to be a transitive subgroup of ${\rm Sym}(\Omega)$. A necessary and sufficient condition, for $\sigma$ and $\tau$, to define an origami (in our definition) is that the commutator $[\sigma,\tau]$ is a product of disjoint cycles of finite lengths.

\begin{lemma}\label{lema1}
There is an origami pair $(X,Q)$, where $X$ is the Loch Ness monster, with branch value $p_{0}$, such that in $Q^{-1}(p_{0})$ there is exactly one point with local degree one.
\end{lemma}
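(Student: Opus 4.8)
The plan is to build $(X,Q)$ by exhibiting an explicit tiling of $X$ by unit squares, glued by the translation rules of an origami, and to read off the pair $(\sigma,\tau) \in {\rm Sym}(\Omega) \times {\rm Sym}(\Omega)$ from the gluing, where $\Omega$ is the (countably infinite) set of squares. By the dictionary recalled in Subsection 2.3, the two things I must arrange are purely combinatorial: (i) $\langle \sigma, \tau\rangle$ is transitive on $\Omega$, equivalently $X$ is connected; and (ii) the commutator $[\sigma,\tau]$ is a product of disjoint cycles of finite length having exactly one fixed point. Condition (ii) is precisely the desired statement about $Q^{-1}(p_{0})$: the points of the fiber over $p_{0}$ are the vertices of the tiling, that is, the cycles of $[\sigma,\tau]$, a vertex surrounded by $4\ell$ square-corners being a cone point of total angle $2\pi\ell$ and hence a point of local degree $\ell$. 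Thus a point of local degree one is exactly a fixed point of $[\sigma,\tau]$, and the requirement that all cycles be finite guarantees that no vertex orbit is infinite, so that $(\sigma,\tau)$ genuinely defines an origami in the sense above.

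Concretely, I would assemble $X$ as a half-infinite chain of finite ``handle blocks'' $B_{0}, B_{1}, B_{2}, \dots$. Each $B_{n}$ is a surface-with-boundary tiled by finitely many unit squares so as to have genus one; the cap $B_{0}$ carries a single boundary circle, while every interior block $B_{n}$ with $n \geq 1$ carries two, and one glues the forward boundary of $B_{n}$ to the backward boundary of $B_{n+1}$ by translations compatible with the square structure. I would choose the repeating interior block so that each of its vertices is surrounded by at least eight squares (cone angle $\geq 4\pi$, hence local degree $\geq 2$), and design the cap $B_{0}$ so that it contains exactly one vertex surrounded by exactly four squares and no other. Since each vertex meets only the finitely many squares of the one or two blocks that contain it, every cone angle is automatically a finite multiple of $2\pi$. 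This yields $[\sigma,\tau]$ as a product of finite cycles with a unique fixed point, giving (ii), while connectedness of the chain gives (i).

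It then remains to identify $X$ topologically. Infinite genus is immediate, since each block $B_{n}$ contributes a handle and there are infinitely many of them. For the number of ends, observe that removing the first $N$ blocks leaves the single unbounded connected piece $B_{N+1} \cup B_{N+2} \cup \cdots$; letting $N \to \infty$ shows that $X$ has exactly one end, and that end is non-planar because the genus accumulates there. By the topological classification of surfaces, a connected, orientable, second-countable surface of infinite genus with exactly one end is homeomorphic to the Loch Ness monster. Finally, since $X$ has infinite genus, its Euler characteristic is negative, so, as noted in Subsection 2.2, $Q$ must genuinely have $p_{0}$ as a branch value.

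The main obstacle is to meet the requirements simultaneously: the repeating block must be rich enough to force infinite genus and a single end, yet every vertex it creates (and every vertex arising along a gluing seam) must have cone angle strictly greater than $2\pi$, so that the unique flat vertex placed in $B_{0}$ is never inadvertently duplicated. Pinning down the exact cycle structure of $[\sigma,\tau]$ --- in particular, that it has precisely one fixed point and no infinite cycles --- is where the construction has to be carried out with care; the one-end count, although conceptually clear from the chain picture, likewise has to be checked against the precise seam identifications.
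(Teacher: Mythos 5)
Your overall strategy is the same as the paper's: build the origami explicitly as an infinite square-tiled surface, read the local degrees over $p_{0}$ off the cycle structure of $[\sigma,\tau]$, and identify the result as the Loch Ness monster via the classification of surfaces (infinite genus, one end). The combinatorial dictionary you invoke is correct: a vertex with $4\ell$ square-corners is a point of local degree $\ell$, so the lemma amounts to $[\sigma,\tau]$ having exactly one fixed point and only finite cycles, and your one-end argument via exhausting the chain of blocks is fine.

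The gap is that the construction itself is never carried out. The entire content of the lemma is the existence of the tiling you describe, and you only state the properties the blocks should have (``I would choose the repeating interior block so that each of its vertices is surrounded by at least eight squares\dots and design the cap $B_{0}$ so that it contains exactly one vertex surrounded by exactly four squares''), without exhibiting any such blocks or verifying that the seam identifications between consecutive blocks do not create extra flat vertices. You flag this yourself as ``the main obstacle,'' which is an accurate self-assessment: as written, the proof defers precisely the step that needs to be proved. The paper closes this by simply writing down two explicit permutations of ${\mathbb N}$, namely $\sigma=(1)(2)(3,4)(5)(6,7)(8)(9,10)\cdots$ and $\tau=(1,2,3)(4,5,6)(7,8,9)\cdots$, whose gluing produces an infinite staircase with one vertex of local degree one, one of local degree two, and all remaining vertices of local degree three; transitivity and the finiteness of all cycles of $[\sigma,\tau]$ are then checked directly. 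To complete your argument you would need to do the analogous thing: exhibit concrete square tilings of $B_{0}$ and of the repeating block (or, more simply, concrete permutations $\sigma,\tau$) and verify the cone angles at every vertex, including those on the gluing seams.
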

\begin{proof}
Let us consider a collection $\{R_{i}: i \in {\mathbb N}\}$ of copies of unit squares in the plane ${\mathbb R}^{2}$ and consider the gluing induced by the permutations of ${\mathbb Z}$ defined by
$$\sigma=(1)(2)(3,4)(5)(6,7)(8)(9,10)(11)(12,13)\cdots$$
$$\tau=(1,2,3)(4,5,6)(7,8,9)(10,11,12)\cdots$$

Now, perform the horizontal gluing of the squares $R_{i}$ following $\sigma$ and the vertical gluings following $\tau$ (see Figure \ref{figura1}). The two vertex sharing the squares $1$ and $2$ provide the vertice of local degree one. There is another vertice of local degree two and all others have local degree three. 
\end{proof}

\begin{figure}[ht]\label{figura1}
\setlength{\unitlength}{1540sp}
\begingroup\makeatletter\ifx\SetFigFont\undefined%
\gdef\SetFigFont#1#2#3#4#5{%
  \reset@font\fontsize{#1}{#2pt}%
  \fontfamily{#3}\fontseries{#4}\fontshape{#5}%
  \selectfont}%
\fi\endgroup%
\centering
\begin{picture}(3024,6024)(5464,-6373)
\put(6901,-3136){\makebox(0,0)[lb]{\smash{{\SetFigFont{10}{20.4}{\rmdefault}{\mddefault}{\updefault}{\color[rgb]{0,0,0}8}%
}}}}
\thinlines
{\color[rgb]{0,0,0}\put(6076,-5161){\framebox(600,600){}}
}%
{\color[rgb]{0,0,0}\put(5476,-5161){\framebox(600,600){}}
}%
{\color[rgb]{0,0,0}\put(5476,-5761){\framebox(600,600){}}
}%
{\color[rgb]{0,0,0}\put(5476,-6361){\framebox(600,600){}}
}%
{\color[rgb]{0,0,0}\put(6076,-3961){\framebox(600,600){}}
}%
{\color[rgb]{0,0,0}\put(6676,-3961){\framebox(600,600){}}
}%
{\color[rgb]{0,0,0}\put(6676,-3361){\framebox(600,600){}}
}%
{\color[rgb]{0,0,0}\put(6676,-2761){\framebox(600,600){}}
}%
{\color[rgb]{0,0,0}\put(7276,-2761){\framebox(600,600){}}
}%
{\color[rgb]{0,0,0}\put(7276,-2161){\framebox(600,600){}}
}%
{\color[rgb]{0,0,0}\put(7276,-1561){\framebox(600,600){}}
}%
{\color[rgb]{0,0,0}\put(7876,-1561){\framebox(600,600){}}
}%
{\color[rgb]{0,0,0}\put(7876,-961){\framebox(600,600){}}
}%
\put(5701,-5536){\makebox(0,0)[lb]{\smash{{\SetFigFont{10}{20.4}{\rmdefault}{\mddefault}{\updefault}{\color[rgb]{0,0,0}2}%
}}}}
\put(5701,-6136){\makebox(0,0)[lb]{\smash{{\SetFigFont{10}{20.4}{\rmdefault}{\mddefault}{\updefault}{\color[rgb]{0,0,0}1}%
}}}}
\put(5701,-5011){\makebox(0,0)[lb]{\smash{{\SetFigFont{10}{20.4}{\rmdefault}{\mddefault}{\updefault}{\color[rgb]{0,0,0}3}%
}}}}
\put(6301,-4936){\makebox(0,0)[lb]{\smash{{\SetFigFont{10}{20.4}{\rmdefault}{\mddefault}{\updefault}{\color[rgb]{0,0,0}4}%
}}}}
\put(6301,-4336){\makebox(0,0)[lb]{\smash{{\SetFigFont{10}{20.4}{\rmdefault}{\mddefault}{\updefault}{\color[rgb]{0,0,0}5}%
}}}}
\put(6301,-3736){\makebox(0,0)[lb]{\smash{{\SetFigFont{10}{20.4}{\rmdefault}{\mddefault}{\updefault}{\color[rgb]{0,0,0}6}%
}}}}
\put(6901,-3736){\makebox(0,0)[lb]{\smash{{\SetFigFont{10}{20.4}{\rmdefault}{\mddefault}{\updefault}{\color[rgb]{0,0,0}7}%
}}}}
\put(6901,-2536){\makebox(0,0)[lb]{\smash{{\SetFigFont{10}{20.4}{\rmdefault}{\mddefault}{\updefault}{\color[rgb]{0,0,0}9}%
}}}}
\put(7426,-2536){\makebox(0,0)[lb]{\smash{{\SetFigFont{10}{20.4}{\rmdefault}{\mddefault}{\updefault}{\color[rgb]{0,0,0}10}%
}}}}
\put(7426,-1936){\makebox(0,0)[lb]{\smash{{\SetFigFont{10}{20.4}{\rmdefault}{\mddefault}{\updefault}{\color[rgb]{0,0,0}11}%
}}}}
\put(7426,-1336){\makebox(0,0)[lb]{\smash{{\SetFigFont{10}{20.4}{\rmdefault}{\mddefault}{\updefault}{\color[rgb]{0,0,0}12}%
}}}}
\put(8001,-1336){\makebox(0,0)[lb]{\smash{{\SetFigFont{10}{20.4}{\rmdefault}{\mddefault}{\updefault}{\color[rgb]{0,0,0}13}%
}}}}
\put(8001,-811){\makebox(0,0)[lb]{\smash{{\SetFigFont{10}{20.4}{\rmdefault}{\mddefault}{\updefault}{\color[rgb]{0,0,0}14}%
}}}}
{\color[rgb]{0,0,0}\put(6076,-4561){\framebox(600,600){}}
}%
\end{picture}%

\caption{Lower part of the origami as defined in Lemma \ref{lema1}}
\end{figure}

\section{Proof of Theorem \ref{main}}
\begin{proof}
We only need to take care of the case when $G$ is infinite group, as the finite case is done in \cite{H}.
As a consequence of \cite[Theorem 7.2]{APV} (by taking $S=X$ the Loch Ness monster), there exists a Galois covering $\pi:X \to X$ with deck group 
$G \cong \theta(G) < {\rm Hom}^{+}(X)$. In particular, the action of $\theta(G)$ is a free tame action (this can also be seen by providing a Riemann surface structure on $X$ and lifting it under $\pi$ to obtain a Riemann surface structure on the top $X$. In this way, $\theta(G)$ is a discete group of conforal automorphisms of the last Riemann surface structure).

Following Lemma \ref{lema1}, we may consider an origami pair $(X,Q_{0})$, with branch value $p_{0} \in T$, and such that in $Q_{0}^{-1}(p_{0})$ there is exactly one vertex $x_{0}$ with local degree one.

Let us consider the origami ${\mathcal O}$ induced by the origami pair $(X,Q_{0} \circ \pi)$. We observe that 
$\theta(G) \leq A={\rm Aut}({\mathcal O})$.
We claim that $A=\theta(G)$. Let us assume, by the contrary, that $\theta(G) \neq A$. In this case, let us consider a Galois (possible branched) cover $\pi_{A}:X \to R$, with deck group $A$ (so $\pi_{A}$ is a locally finite branched cover). We may observe the following.
\begin{enumerate}
\item[(i)]  As $\theta(G) \leq A$ and as $\pi$ and $\pi_{A}$ are locally finite branched covers, there is a locally finite branched cover  $P:X \to R$, degree at least two (it could be of infinite degree) such that $P \circ \pi=\pi_{A}$, and
\item[(iii)] As $A$ is contained in the deck group of $Q_{0} \circ \pi$, there is a locally finite branched cover  $L:R \to T$ such that $L \circ P=Q_{0}$.
\end{enumerate}

Set $r_{0}=P(x_{0})$ and let $d \geq 1$ be the branched order of $\pi_{A}$ at $r_{0}$. Observe that each point in the fiber $\pi_{A}^{-1}(r_{0})$ has as $A$-stabilizer a cyclic group of order $d$.

As $\pi$ has no branched values, it follows that the local degree of $P$ at each point in the fiber $P^{-1}(r_{0})$ is $d$. But this means that, at each of these points, the local degree of $Q_{0}=L \circ P$ must be the same. This is a contradiction to condition that $Q_{0}$ has exactly one point of local degree one in $Q_{0}^{-1}(p_{0})$. 
\end{proof}



\begin{thebibliography}{99}


\bibitem{APV}
T. Aougab, P. Patel and N. G. Vlamis. 
Isometry of infinite-genus hyperbolic surfaces.
{\it  Math. Ann.} {\bf 381} (2021), 459-498. 
 https://doi.org/10.1007/s00208-021-02164-z
 

\bibitem{BV13}
J. P. Bowman, and F. Valdez. Wild singularities of flat surfaces. {\it Israel Journal of Mathematics} {\bf 197}, no. 1 (2013): 69-97.


\bibitem{Chamanara04}
R. Chamanara. Affine automorphism groups of surfaces of infinite type. {\it Contemporary mathematics} {\bf 355} (2004): 123-146.


\bibitem{HS}
F. Herrlich and G. Schmith\"usen.
Dessins d'enfants and origami curves.
In, Handbook of Teichm\"uller Theory, Volume II.
Editor: Athanase Papadopoulos (IRMA, Strasbourg, France) 2009. 
 pp: 767--809.
DOI: 10.4171/055-1/19

\bibitem{H}
R. A. Hidalgo.
Automorphisms groups of origami curves. 
{\it Archiv der Mathematik} {\bf 116} No. 4 (2021), 385--390.


\bibitem{HHW13}
W. P. Hooper, P. Hubert, and B. Weiss. Dynamics on the infinite staircase. {\it Discrete \& Continuous Dynamical Systems} {\bf 33}, no. 9 (2013): 4341.

\bibitem{Hooper15}
W. P. Hooper. The invariant measures of some infinite interval exchange maps. {\it Geometry \& Topology} {\bf 19}, no. 4 (2015): 1895-2038.



\bibitem{Lochak}
P. Lochak. 
On arithmetic curves in the moduli spaces of curves. 
{\it J. Inst. Math. Jussieu} {\bf 4} No.3 (2005), 443--508.



\bibitem{S}
G. Schmith\"usen.
Veech Groups of Origamis. 
Ph. D. thesis, Universit\"at Karlsruhe, 2005.



\end{thebibliography}
\end{document}